\def\co{\colon\thinspace}
\newcommand{\RR}{\mathbb{R}}
\newcommand{\QQ}{\mathbb{Q}}
\newcommand{\CC}{\mathbb{C}}
\newcommand{\sph}{\mathbb{S}}
\newcommand{\ZZ}{\mathbb{Z}}
\newcommand{\Diff}{\operatorname{Diff}}
\newtheorem{theorem}{Theorem}[section]
\newtheorem{problem}[theorem]{Problem}
\newtheorem{lemma}[theorem]{Lemma}
\newtheorem{prop}[theorem]{Proposition}
\newtheorem{remark}[theorem]{Remark}
\theoremstyle{definition}
\def\equalsfill{$\m@th\mathord=\mkern-7mu
\cleaders\hbox{$\!\mathord=\!$}\hfill
\mkern-7mu\mathord=$}
\begin{document}

\abovedisplayskip=6pt plus3pt minus3pt
\belowdisplayskip=6pt plus3pt minus3pt

\title[Diffeomorphic souls and disconnected moduli spaces]{Diffeomorphic souls and disconnected moduli spaces of nonnegatively curved metrics}

\author{Igor Belegradek}
\address{Igor Belegradek\\ School of Mathematics\\ Georgia Institute of
Technology\\ Atlanta, GA, USA 30332}
\email{ib@math.gatech.edu}
\urladdr{www.math.gatech.edu/~ib}

\author[David Gonz\'alez-\'Alvaro]{David Gonz\'alez-\'Alvaro}
\address{David Gonz\'alez-\'Alvaro\\ ETSI de Caminos, Canales y Puertos\\ Universidad Polit\'ecnica de Madrid\\ 28040 Spain}
\email{david.gonzalez.alvaro@upm.es}
\urladdr{https://dcain.etsin.upm.es/~david/}

\thanks{2010 \it  Mathematics Subject classification.\rm\ 
Primary 53C20.}
\keywords{Nonnegative curvature, soul, moduli space, positive scalar curvature.}
\thanks{This work was partially supported by the Simons Foundation grant 524838 (Belegradek) and by MINECO grant MTM2017-85934-C3-2-P (Gonz\'alez-\'Alvaro).}

\begin{abstract} 
We give examples of open manifolds that carry infinitely many complete metrics of nonnegative 
sectional curvature such that they all have the same soul, and their isometry classes 
lie in different connected components of the moduli space. 
All previously known examples of this kind have souls of codimension one.
In our examples the souls have codimensions three and two. 
\end{abstract}

\maketitle

\thispagestyle{empty}

\section{Motivation and results}

There has been considerable recent interest in studying spaces of metrics with
various curvature restrictions, such as nonnegative sectional curvature, to be denoted \mbox{$K\ge 0$},
see~\cite{TW15} and references therein.
For a  manifold $V$ let 
$\mathcal{R}_{K\ge 0}(V)$ denote the space of complete Riemannian metrics on $V$ 
of $K\ge 0$ with the topology of smooth ($=C^\infty$) uniform convergence on compact sets,
and $\mathcal{M}_{K\ge 0}(V)$ be the corresponding moduli space,
the quotient space of $\mathcal{R}_{K\ge 0}(V)$ by the $\Diff(V)$-action via pullback.  

The soul construction~\cite{CG72} takes as the input a complete
metric of $K\ge 0$ on an open connected manifold $V$, and a basepoint of $V$,
and produces a totally convex compact boundaryless submanifold $S$ of $V$, called the {\em soul},
such that $V$ is is diffeomorphic to a tubular neighborhood of $S$.
If we fix a metric and vary the basepoint, the resulting souls are ambiently isotopic~\cite{Yi90} and isometric~\cite{Sh79}.
Consider the map {\bf soul} that sends an isometry class of a complete metric of $K\ge 0$ on $V$ 
to the isometry class of its soul:
\vspace{3pt}
\[
\text{\bf soul}\co\mathcal{M}_{K\ge 0}(V)\rightarrow \coprod_{S\in \mathcal V}\mathcal{M}_{K\ge 0}(S)
\]
where the co-domain is given the topology of disjoint union, and $\mathcal V$ is a set
of pairwise non-diffeomorphic manifolds such that $S\in \mathcal V$ if and only if
$S$ is diffeomorphic to a soul of a complete metric of $K\ge 0$ on $V$. 

A tantalizing open problem is to decide if the map {\bf soul}
is continuous; the difficulty is that the soul is constructed via asymptotic geometry
which is not captured by the compact-open topology on the space of metrics. 
The following is immediate from~\cite[Theorem 2.1]{BFK17}.

\begin{theorem} 
\label{thm: BFK cont}
If $V$ is indecomposable, then the map {\bf soul}
is continuous. 
\end{theorem}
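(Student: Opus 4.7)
The plan is to reduce the statement to continuity on the space of metrics, where \cite{BFK17} applies directly. Since $\mathcal{M}_{K\ge 0}(V)$ carries the quotient topology from the $\Diff(V)$-action on $\mathcal{R}_{K\ge 0}(V)$, the universal property of the quotient gives that $\mathbf{soul}$ is continuous if and only if its lift
$$\widetilde{\mathbf{soul}}\co\mathcal{R}_{K\ge 0}(V)\rightarrow \coprod_{S\in \mathcal V}\mathcal{M}_{K\ge 0}(S)$$
is continuous, and this is what I would check.

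To do so, I would fix $g\in \mathcal{R}_{K\ge 0}(V)$ with a soul $S$, take an arbitrary sequence $g_n\to g$ in $\mathcal{R}_{K\ge 0}(V)$, and feed it into \cite[Theorem 2.1]{BFK17}. The content of that result under indecomposability is, in substance, that for all sufficiently large $n$ one may choose a soul $S_n$ of $(V,g_n)$ together with a diffeomorphism $\phi_n\in \Diff(V)$ with $\phi_n(S_n)=S$ such that $\phi_n^*g_n\to g$ in $C^\infty$ on a neighborhood of $S$. Two things would follow simultaneously. First, each $S_n$ is diffeomorphic to $S$, so the sequence $\widetilde{\mathbf{soul}}(g_n)$ eventually lies in the single component of the disjoint union indexed by $S$; in particular the disjoint union topology reduces to the topology of $\mathcal{M}_{K\ge 0}(S)$. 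Second, restricting to $S$ gives $(\phi_n|_{S_n})^*(g_n|_{S_n})\to g|_S$ in $\mathcal{R}_{K\ge 0}(S)$, hence $[g_n|_{S_n}]\to [g|_S]$ in $\mathcal{M}_{K\ge 0}(S)$. Combining the two, $\widetilde{\mathbf{soul}}(g_n)\to \widetilde{\mathbf{soul}}(g)$, as required.

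All the genuine mathematical weight rests on \cite[Theorem 2.1]{BFK17}; what remains afterwards is an unwinding of the quotient topology and of the definition of a disjoint union. The conceptual obstacle, already handled there, is that the soul depends on the asymptotic geometry of $(V,g)$ (via Busemann functions and Sharafutdinov retractions), which a priori is not controlled by $C^\infty$-convergence on compact sets, so the diffeomorphism type, dimension, or isometry class of the soul could in principle jump under arbitrarily small perturbations; indecomposability of $V$ is exactly the hypothesis that rules out such jumps. I therefore do not expect any step beyond the citation to present real difficulty.
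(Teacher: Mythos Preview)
Your proposal is correct and follows essentially the same route as the paper: both lift to $\mathcal{R}_{K\ge 0}(V)$ via the quotient topology, invoke \cite[Theorem~2.1]{BFK17} for continuity of the soul as a submanifold, use the small ambient isotopy to identify nearby souls and compare induced metrics, and then descend. The only cosmetic difference is that you argue with sequences (harmless since $\mathcal{R}_{K\ge 0}(V)$ is metrizable) and fold the ambient-isotopy step into your paraphrase of \cite[Theorem~2.1]{BFK17}, whereas the paper keeps these as two separate sentences.
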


An open manifold is {\em indecomposable\,} if it admits a complete
metric of $K\ge 0$ such that the normal sphere bundle to a soul has no section.
It follows from~\cite{Yi90} that for indecomposable $V$ the soul 
is uniquely determined by the metric (and not the basepoint).
Moreover, \cite{BFK17} implies that the souls of nearby metrics
are ambiently isotopic by a small compactly supported isotopy. 
In particular, metrics with non-diffeomorphic souls in an indecomposable
manifold lie in different connected components of $\mathcal{M}_{K\ge 0}(V)$. 

There are many examples where the diffeomorphism (or even homeomorphism) type of the soul depends on the metric, 
see~\cite{Be03, KPT05, BKS11, Ot11, BKS15, GZ18}, and if the ambient open manifold $V$ is indecomposable, this gives
examples where $\mathcal{M}_{K\ge 0}(V)$ is not connected, or even has infinitely many connected components.

If $V$ has a complete metric with $K\ge 0$ with soul of codimension one, then {\bf soul}
is a homeomorphism, see~\cite{BKS11}. Thus
if for some soul $S$ the space $\mathcal{M}_{K\ge 0}(S)$ has infinitely many connected components, then so does 
$\mathcal{M}_{K\ge 0}(V)$; for example, this applies to $V=S\times\RR$.

Examples of closed manifolds $S$ for which 
$\mathcal{M}_{K\ge 0}(S)$ has infinitely many connected 
components can be found in~\cite{KPT05, DKT18, De17, Go20a, DG19, De20}.
These metrics on $S$ have $K\ge 0$ and $\mathrm{scal}>0$, and the connected components are
distinguished by index-theoretic invariants that are constant
of paths of $\mathrm{scal}>0$.

The papers mentioned in the previous paragraph only 
prove existence of infinitely many path-components.
We take this opportunity to note that they actually get infinitely many 
connected components.

\begin{theorem}
\label{thm: path scal>0}
Let $M$ be a closed manifold. If two points 
in the same connected component of $\mathcal M_{K\ge 0}(M)$
have $\mathrm{scal}>0$, then they  
can be joined by a path of isometry classes of $\mathrm{scal}>0$. 
\end{theorem}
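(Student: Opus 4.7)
The plan is to identify $A := \mathcal{M}_{K \ge 0, \mathrm{scal} > 0}(M) \subset Y := \mathcal{M}_{K \ge 0}(M)$ as an open subset of $Y$ with enough local path-connectedness to forbid a connected component of $Y$ from containing two distinct path-components of $A$. Granted this, $[g_0], [g_1] \in A$ in the same connected component of $Y$ automatically lie in the same path-component of $A$, producing the desired path of $\mathrm{scal} > 0$ isometry classes.

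Openness of $A$ in $Y$ is routine: since $M$ is compact and scalar curvature depends continuously on the metric in the $C^\infty$-uniform topology, $\mathrm{scal}>0$ is an open condition on $\mathcal{R}(M)$, so $\widetilde A := \mathcal{R}_{K \ge 0, \mathrm{scal} > 0}(M)$ is open in $\mathcal{R}_{K \ge 0}(M)$. The quotient map by $\Diff(M)$ is open, whence $A$ is open in $Y$.

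The heart of the proof is a local structure result: every $[g] \in A$ has arbitrarily small path-connected open neighborhoods in $A$, and more broadly every $[h] \in Y$ has a neighborhood $U$ such that $U \cap A$ lies in a single path-component of $A$. For $[g] \in A$ the content is to join a representative $g$ to any nearby $h \in \mathcal{R}_{K \ge 0}(M)$ by a path in $\mathcal{R}_{K \ge 0}(M)$; by openness of $\widetilde A$, a short enough such path is automatically in $\widetilde A$. I view this as the main obstacle, since the nonlinear condition $K \ge 0$ is not preserved under linear interpolation of metrics. I would attempt to handle it via a slice argument for the $\Diff(M)$-action at $g$ combined with a specifically crafted geometric deformation (Cheeger-type or conformal) preserving $K \ge 0$.

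Given these local properties, path-components $\{P_\alpha\}$ of $A$ are open in $A$, hence in $Y$, and any $P_\alpha$ meeting the connected component $C$ of $Y$ is contained in $C$, since paths in $Y$ starting in $C$ remain in $C$. The strengthened local triviality makes the "path-component label" $\sigma \colon \overline A \to \{P_\alpha\}$, sending $[h]$ to the unique path-component of $A$ detected by any small enough neighborhood of $[h]$, well-defined and locally constant. Combined with connectedness of $C$ (and openness of $Y \setminus \overline A$ to account for any points of $C$ not in $\overline A$), this forces all points of $C \cap A$ into a single path-component, so $[g_0]$ and $[g_1]$ lie in the same path-component of $A$.
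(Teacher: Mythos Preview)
Your approach has a genuine gap at exactly the point you flag as ``the main obstacle.'' You need that nearby metrics in $\mathcal{R}_{K\ge 0}(M)$ can be joined by a path \emph{within} $\mathcal{R}_{K\ge 0}(M)$, i.e., local path-connectedness of the space of nonnegatively curved metrics. This is not known in general, and the remedies you suggest do not work: Cheeger deformations require an isometric group action, which an arbitrary nearby metric has no reason to admit; conformal changes do not preserve $K\ge 0$; and Ebin's slice only gives a slice in the space of \emph{all} metrics, not one lying in $\mathcal{R}_{K\ge 0}(M)$. Without this, you cannot conclude that the path-components $P_\alpha$ of $A$ are open in $Y$, and your ``path-component label'' function $\sigma$ is not shown to be well-defined or locally constant. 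In short, the step you identify as hard is indeed hard, and you have not supplied an argument for it.

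There is also a mis-reading of the statement that makes your task unnecessarily difficult. The theorem asks only for a path of isometry classes with $\mathrm{scal}>0$; it does \emph{not} require the path to stay in $\mathcal{M}_{K\ge 0}(M)$. The paper exploits this: it works in the moduli space $X$ of \emph{all} metrics on $M$, which is genuinely locally path-connected (as a quotient of an open set in a Fr\'echet space), so connected components of the open subset $X_{\mathrm{scal}>0}$ are path-connected. The key device is Ricci flow. Since $M$ carries a metric of $\mathrm{scal}>0$ it admits no flat metric, and a Ricci-flat metric with $K\ge 0$ is flat; hence no point of $\mathcal{M}_{K\ge 0}(M)$ is Ricci-flat, and Ricci flow instantly pushes each such point into $X_{\mathrm{scal}>0}$. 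Using continuous dependence of Ricci flow on the initial metric, one builds a continuous map $T\colon X\to X$ (flow for a small, continuously varying time) that sends the given connected component $C\subset \mathcal{M}_{K\ge 0}(M)$ into $X_{\mathrm{scal}>0}$. Then $T(C)$ is connected, hence lies in a single path-component of $X_{\mathrm{scal}>0}$, and concatenating a path there with the two Ricci flow trajectories from $y,z$ to $T(y),T(z)$ gives the desired path. Thus the paper never needs local path-connectedness of $\mathcal{M}_{K\ge 0}(M)$ at all.
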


In this paper we show that some of these $S$ as above 
can be realized as souls of codimensions 
$2$ or $3$ in indecomposable manifolds. 
The codimension $2$ case is a fairly straightforward consequence of results in~\cite{WZ90, KS93, DKT18}.

\begin{theorem}
\label{thm: witten}
For every positive integer $n$ there are infinitely many homotopy types that 
contain a manifold $M$ such that\vspace{5pt}\newline
$\textup{(a)}$ $M$ is a simply-connected manifold 
that is the total space of a principal circle bundle over $\mathbb{S}^2\times \mathbb{C}P^{2n}$, and\vspace{5pt}\newline
$\textup{(b)}$
if $V$ is the total space of a non-trivial complex line bundle over $M$,
then $V$ has infinitely many complete metrics of $K\ge 0$ whose souls
equal the zero section, and 
whose isometry classes lie in different connected components of $\mathcal{M}_{K\ge 0}(V)$.
\end{theorem}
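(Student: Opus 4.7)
The plan is to realize the Wang--Ziller style circle bundles treated in \cite{DKT18} as souls of codimension two in indecomposable complex line bundles, and to transport the disconnectedness of the moduli space from soul to total space via Theorem~\ref{thm: BFK cont}.

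For each $n\ge 1$, simply-connected principal circle bundles $M\to \mathbb{S}^2\times \mathbb{C}P^{2n}$ are classified by primitive Euler classes in $H^2(\mathbb{S}^2\times \mathbb{C}P^{2n};\ZZ)\cong\ZZ^2$, and invariants due to Kreck--Stolz \cite{KS93} distinguish infinitely many homotopy types among them. For each such $M$, the Wang--Ziller construction \cite{WZ90} supplies $K\ge 0$ metrics with $\mathrm{scal}>0$, and \cite{DKT18} shows that infinitely many of them represent pairwise distinct path-components of $\mathcal{M}_{K\ge 0}(M)$, detected by index-theoretic invariants; Theorem~\ref{thm: path scal>0} then upgrades these to genuine connected components. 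Let $\{g_i\}_{i\in\NN}$ be such a sequence of representatives. Now take $V$ to be the total space of a non-trivial complex line bundle $L\to M$. Its unit circle subbundle is a non-trivial principal $S^1$-bundle, which admits no section; this is precisely the normal sphere bundle to the zero section, so $V$ is indecomposable.

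The main technical step is to extend each $g_i$ to a complete metric $\tilde g_i$ on $V$ of $K\ge 0$ with soul $(M,g_i)$. I would realize $L$ as the associated line bundle $L = P\times_{S^1}\CC$, where $P\to M$ is chosen so that $P\to \mathbb{S}^2\times \mathbb{C}P^{2n}$ is itself a principal $T^2$-bundle in the Wang--Ziller family, and the metrics defining the $g_i$ lift to $T^2$-invariant $K\ge 0$ metrics $g_{P,i}$ on $P$ inducing $g_i$ via the Riemannian submersion $P\to M$. Equipping $\CC$ with a rotationally symmetric complete metric $h$ of $K\ge 0$ that is flat outside a compact set, the product $(P\times \CC,\, g_{P,i}\oplus h)$ has $K\ge 0$ and admits a free diagonal isometric $S^1$-action; by O'Neill's formula the quotient metric $\tilde g_i$ on $V=P\times_{S^1}\CC$ is complete, has $K\ge 0$, and restricts to $g_i$ on the totally geodesic zero section $M=P\times_{S^1}\{0\}$, which is then its soul. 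The principal obstacle lies in this compatible lift: one must verify that the specific metrics from \cite{DKT18} (or suitable representatives in the same components of $\mathcal M_{K\ge 0}(M)$) do arise as submersion metrics from such a $T^2$-bundle, and that varying the lifting parameters does not collapse or identify the chosen components.

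Once the $\tilde g_i$ are in place, Theorem~\ref{thm: BFK cont} applied to the indecomposable $V$ makes the soul map $\mathcal M_{K\ge 0}(V)\to \mathcal M_{K\ge 0}(M)$ continuous. Hence if $\tilde g_i$ and $\tilde g_j$ with $i\ne j$ were in the same connected component of $\mathcal{M}_{K\ge 0}(V)$, their souls $g_i$ and $g_j$ would lie in the same component of $\mathcal{M}_{K\ge 0}(M)$, contradicting the construction. This yields the desired infinitely many connected components of $\mathcal{M}_{K\ge 0}(V)$.
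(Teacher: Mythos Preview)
Your overall strategy matches the paper's: realize the Wang--Ziller circle quotients as codimension-two souls in a non-trivial complex line bundle, then invoke indecomposability and Theorem~\ref{thm: BFK cont}. The difference is in how the extension to $V$ is carried out, and this is exactly where your acknowledged obstacle sits.

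You try to lift each $g_i$ to a \emph{fixed} principal $S^1$-bundle $P\to M$ (the unit circle bundle of the fixed $L$), viewed as a $T^2$-bundle over $\sph^2\times\CP^{2n}$. The difficulty is real: the $g_i$ from \cite{DKT18} are submersion metrics coming from \emph{different} circle actions on $\sph^3\times\sph^{4n+1}$ (varying $k$ with $l$ fixed), and there is no reason these should all factor through one and the same $P$. The paper sidesteps this entirely. Since $\sph^3\times\sph^{4n+1}$ is $2$-connected, \emph{every} complex line bundle over $M$ is the quotient of $\sph^3\times\sph^{4n+1}\times\CC$ by the circle action $e^{i\phi}(x,y,z)=(e^{il\phi}x,\,e^{-ik\phi}y,\,e^{im\phi}z)$; the product of round metrics and the flat metric on $\CC$ then descends directly to a complete $K\ge 0$ metric on $V$ with soul isometric to $(M,g_k)$. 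No intermediate $P$ is needed.

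There is a second point you do not address and which the paper handles explicitly: as $k$ varies, the total spaces $V_k$ obtained this way are not a priori the same manifold. The paper observes that for fixed $l$ and fixed Euler class of the line bundle there are only finitely many diffeomorphism types of the pair $(V,\text{soul})$, so by pigeonhole infinitely many of the $V_k$ coincide, and the corresponding metrics all live on a single $V$. Your formulation fixes $V$ at the outset and then tries to lift, which is precisely what creates the obstacle; the paper builds a $V$ for each $k$ and then collapses them after the fact.
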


The codimension $3$ case requires a bit more work. Recall that 
if $M$ is the total space of a linear $\sph^3$-bundle over $\sph^4$, then $M$ admits 
a metric of $K\ge 0$~\cite{GZ00}, and moreover, if the bundle has nonzero Euler number,
then $\mathcal{M}_{K\ge 0}(M)$ has infinitely many connected components~\cite{De17, Go20a}.
We prove:

\begin{theorem}
\label{thm: cd 3}
Let $M$ be the total space of a linear $\sph^3$-bundle $\xi$ over $\sph^4$ with
Pontryagin number $p_1(\xi)$ and nonzero Euler number $e(\xi)$.
If $\frac{p_1(\xi)}{2e(\xi)}$ is not an odd integer, 
then $M$ is diffeomorphic to a codimension three submanifold $S$ of an indecomposable manifold $V$
that admits infinitely many complete metrics of $K\ge 0$ with soul $S$ whose
isometry classes lie in different connected components of $\mathcal{M}_{K\ge 0}(V)$.
\end{theorem}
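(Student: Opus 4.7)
My plan is to take $V$ to be the total space of a rank three oriented real vector bundle $E\to M$ chosen so that three properties hold simultaneously: (i) $V$ admits a complete metric of $K\ge 0$ with the zero section as soul, (ii) $V$ is indecomposable, i.e., the unit $S^2$-bundle $S(E)\to M$ has no section, and (iii) the infinitely many $K\ge 0$ metrics on $M$ from~\cite{De17,Go20a} lift to such metrics on $V$ whose isometry classes lie in distinct connected components of $\mathcal M_{K\ge 0}(V)$. Once (i)--(iii) are in place the conclusion is immediate: Theorem~\ref{thm: BFK cont} says $\mathbf{soul}$ is continuous on $\mathcal M_{K\ge 0}(V)$, and Theorem~\ref{thm: path scal>0} ensures the $g_i$ on $M$ occupy infinitely many connected components of $\mathcal M_{K\ge 0}(M)$, forcing the lifts to occupy infinitely many components of $\mathcal M_{K\ge 0}(V)$.

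For the lifts I would use the standard connection metric construction. Recall that by Grove--Ziller~\cite{GZ00} $M$ carries a cohomogeneity one action of a compact Lie group $G$ with a $G$-invariant metric of $K\ge 0$, and the metrics $g_i$ of~\cite{De17,Go20a} are $G$-invariant and of $\mathrm{scal}>0$. I would then construct $E$ as the pullback $\pi^* E_0$ of a rank three oriented bundle $E_0$ over $S^4$ via the bundle projection $\pi\co M\to S^4$. Any $\mathrm{SO}(3)$-connection on $E_0$ pulls back to a $G$-equivariant connection on $E$, and the Strake--Walschap/Guijarro--Walschap connection metric construction then turns each $G$-invariant $K\ge 0$ metric on $M$ into a complete $K\ge 0$ metric on $V=E$ with the zero section as soul, producing the lifts $\tilde g_i$.

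For indecomposability, I would use obstruction theory for a section of $S(E)\to M$. From the Gysin sequence of $\xi$ with $e(\xi)\ne 0$ one computes
\[
H^3(M;\ZZ)=0,\quad H^4(M;\ZZ)\cong \ZZ/e(\xi),\quad H^5(M;\ZZ)=H^6(M;\ZZ)=0.
\]
The primary obstruction $e(E)\in H^3(M;\ZZ)$ is automatically zero; the secondary obstruction lives in $H^4(M;\pi_3(S^2))\cong\ZZ/e(\xi)$ and, by naturality of obstruction classes, equals the image of the secondary obstruction of $E_0$ over $S^4$ under $\pi^*\co H^4(S^4;\ZZ)\to H^4(M;\ZZ)$; the next two obstructions vanish for dimensional reasons, and only a potential class in $H^7(M;\pi_6(S^2))=\ZZ/12$ remains, which I would arrange to kill by a small twist of $E_0$. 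Thus $V$ is indecomposable provided the classifying integer $k\in\pi_3(\mathrm{SO}(3))\cong\ZZ$ of $E_0$ has nonzero reduction in $\ZZ/e(\xi)$.

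The hardest point, and the main obstacle, is to show that the hypothesis that $p_1(\xi)/2e(\xi)$ is not an odd integer is exactly what permits such a choice of $E_0$ subject to the additional constraint that the resulting connection metric on $E$ really be of $K\ge 0$ with the right soul. Unwinding the Pontryagin class formulas for linear $S^3$-bundles over $S^4$, and writing $p_1(\pi^*E_0)=k\cdot \pi^*u$ where $u$ generates $H^4(S^4;\ZZ)$, the question reduces to a residue computation in $\ZZ/e(\xi)$: the allowable classifying integers $k$ are constrained by Grove--Ziller compatibility, and the set of achievable residues of $\tfrac12 p_1(\pi^*E_0)$ is a subgroup of $\ZZ/e(\xi)$ which is shown to be nontrivial precisely when $p_1(\xi)/2e(\xi)\notin 2\ZZ+1$. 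I expect this characteristic-class bookkeeping, together with verifying the killing of the final $\ZZ/12$ obstruction and checking the $K\ge 0$ property of the connection metric, to be the technical heart of the proof.
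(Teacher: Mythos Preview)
Your overall architecture (rank-three bundle over $M$, indecomposability via no section of the sphere bundle, then continuity of $\mathbf{soul}$) matches the paper, but the implementation has two genuine gaps and diverges from the paper at exactly those points.

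\textbf{Gap 1: the lifting step.} You assert that the Strake--Walschap/Guijarro--Walschap connection metric turns each $G$-invariant $K\ge 0$ metric $g_i$ on $M$ into a complete $K\ge 0$ metric on $E$ with the zero section as soul. That is not true in general: a connection metric on a vector bundle has $K\ge 0$ only under restrictive hypotheses on the curvature of the connection relative to the base metric, and there is no reason a fixed pullback connection from $S^4$ would satisfy these for all the $g_i$. Moreover, the metrics in \cite{De17,Go20a} arise from \emph{different} cohomogeneity-one structures on $M$ (one for each pair $(k_i,l_i)$ in the Crowley--Escher family), so there is no single $G$-action under which they are all invariant; your equivariant pullback framework collapses. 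The paper avoids this entirely: for each $(k_i,l_i)$ it takes the principal $\sph^3$-bundle $P_{k_i,l_i}\to M_{k_i,l_i}$ (the diagonal quotient of the Grove--Ziller principal $\sph^3\times\sph^3$-bundle) and forms the associated $3$-plane bundle $V_{k_i,l_i}=P_{k_i,l_i}\times_{\sph^3}\RR^3$. The metric on $V_{k_i,l_i}$ is the Riemannian submersion quotient of $\gamma_{k_i,l_i}\times (\text{flat }\RR^3)$, so $K\ge 0$ is automatic and the soul is the zero section by construction. A Cheeger deformation (Proposition~\ref{prop: cheeger}) then connects the induced soul metric to the Goodman metric $h_{k_i,l_i,r}$ inside $\mathcal M_{K\ge 0}(M)$. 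Finally, since there are only finitely many isomorphism classes of $3$-plane bundles over $M$ (Lemma~\ref{lem: euler pontr}), pigeonhole gives infinitely many $i$ with $V_{k_i,l_i}\cong V$ for a single $V$.

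\textbf{Gap 2: indecomposability and the role of the hypothesis.} Your obstruction-theory plan leaves the identification of the secondary obstruction, the treatment of the top class in $H^7(M;\ZZ/12)$, and the ``residue computation'' connecting everything to $p_1(\xi)/2e(\xi)$ as black boxes. The paper's route is much shorter and explains the hypothesis cleanly. First, Lemma~\ref{lem: 3-plane bundles} says: over a space with $H^1(\,\cdot\,;\ZZ_2)=0=H^2$, any $3$-plane bundle with a nowhere-zero section is \emph{trivial}. Hence non-triviality of the bundle already gives indecomposability, with no higher obstructions to chase. Second, the hypothesis enters via Lemma~\ref{lem:Milnor}: a short Gysin-sequence argument for the $\sph^3$-bundle $P_{k,l}\to P_k$ shows that if the principal $H$-bundle $P_{k,l}\to M_{k,l}$ were trivial then $\frac{k}{k+l}\in\ZZ$, which by Lemma~\ref{lem: odd} is equivalent to $\frac{p_1(\xi)}{2e(\xi)}$ being an odd integer. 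Thus the hypothesis forces $P_{k,l}\to M_{k,l}$ non-trivial, hence (Lemma~\ref{lem: class spaces}) the associated $3$-plane bundle non-trivial, hence $V$ indecomposable.
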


Milnor famously showed that some $\sph^3$-bundles over $\sph^4$ are exotic spheres~\cite{Mi56}.
In fact, $M$ is a homotopy sphere if and only if $e(\xi)=\pm 1$. Unfortunately, if 
$e(\xi)=\pm 1$, then $\frac{p_1(\xi)}{2}$ is an odd integer, so no $M$ in 
Theorem~\ref{thm: cd 3} is a homotopy sphere. On the other hand,
for every integer $n$ with $n\ge 2$ there is $M$ as in the conclusion of Theorem~\ref{thm: cd 3}
with  $H^4(M)\cong\ZZ_n$, see Section~\ref{sec: codim 3}. 

To prove Theorem~\ref{thm: cd 3} we use results of Grove-Ziller~\cite{GZ00}
and some topological considerations to find an indecomposable $V$ with a codimension three soul,
and then we observe that the metric on the soul can be moved by Cheeger deformation to 
metrics in~\cite{De17, Go20a} that represent infinitely many connected components.

Let us conclude by mentioning that other results on connected components of
moduli spaces corresponding to various nonnegative or positive curvature conditions 
can be found in~\cite{KS93, BG95, Wr11, TW17, Go20b}.

\subsection*{Structure of the paper} 
Theorems~\ref{thm: BFK cont} and~\ref{thm: path scal>0} are proved in Section~\ref{sec: connect comp}.
Theorem~\ref{thm: witten} is established in Section~\ref{sec: codim2}.
Theorem~\ref{thm: cd 3} is proved in Section~\ref{sec: codim 3}, and the needed background 
is reviewed in Sections~\ref{sec: cheeger}, \ref{sec: 3-sphere}, \ref{sec: bundles}.

\subsection*{Acknowledgements} The authors are grateful to Luis Guijarro for hospitality during
Belegradek's visit to Madrid where this project was started.

\section{Continuity of souls, connectedness and path-connectedness}
\label{sec: connect comp}

\begin{proof}[Proof of Theorem~\ref{thm: BFK cont}]
Theorem 2.1 in~\cite{BFK17} says that the map that sends  
a complete metric of $K\ge 0$ on $V$ to its soul, considered as a point in the space of smooth compact submanifolds of $V$ with smooth topology, is continuous. 
Two nearby submanifolds are ambiently isotopic by a small isotopy with compact support.
Hence, the isometry classes of the induced metrics on these submanifolds are close in the moduli space.
Thus we get a continuous map
\vspace{3pt}
\[
\mathcal{R}_{K\ge 0}(V)\rightarrow \coprod_{S\in \mathcal V}\mathcal{M}_{K\ge 0}(S)
\]
that takes a metric to the isometry class of its soul, where the co-domain is given
the disjoint union topology, i.e., 
the set in the co-domain is open if and only if its intersection with each $\mathcal{M}_{K\ge 0}(S)$ is open. 
Finally, by the definition of quotient topology the above continuous map 
descends to a continuous map defined on $\mathcal{M}_{K\ge 0}(V)$. 
\end{proof}

Let $X$ denote the the space of isometry classes of Riemannian metrics on a closed manifold $M$
with smooth ($=C^\infty$) topology, and let $X_{\mathrm{scal}\ge 0}$, $X_{\mathrm{scal}>0}$
be the subspaces of $X$ of isometry classes of metrics of nonnegative and positive scalar curvature, respectively.

\begin{lemma}
$X$ is metrizable.
\end{lemma}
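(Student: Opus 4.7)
The plan is to apply Ebin's slice theorem to exhibit $X$ as locally the quotient of a metrizable space by a compact Lie group action, and then to invoke Urysohn's metrization theorem.

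The space $\mathcal R(M)$ of smooth Riemannian metrics on the closed manifold $M$ is an open convex subset of the Fr\'echet space $\Gamma^\infty(S^2T^*M)$ of smooth symmetric $2$-tensor fields on $M$ (equipped with its countable family of $C^k$-seminorms), so $\mathcal R(M)$ is separable, second countable, and metrizable. Since $\Diff(M)$ acts by pullback through homeomorphisms, the quotient map $\pi\co\mathcal R(M)\to X$ is open, and thus $X$ is also second countable.

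Ebin's slice theorem (in its $C^\infty$ version) asserts that at every $g\in\mathcal R(M)$ there is a smooth submanifold $S_g\subset\mathcal R(M)$ invariant under the isotropy group $K_g:=\operatorname{Isom}(M,g)$, which is a compact Lie group by Myers--Steenrod, such that the canonical map $\Diff(M)\times_{K_g}S_g\to\mathcal R(M)$ is a $\Diff(M)$-equivariant homeomorphism onto an open $\Diff(M)$-invariant neighborhood of $\Diff(M)\cdot g$. In particular orbits are closed, the $\Diff(M)$-action is proper, and a neighborhood of $[g]$ in $X$ is homeomorphic to $S_g/K_g$. Since $S_g$ is metrizable and $K_g$ is a compact topological group acting continuously, the averaged distance
\[
\hat d([x],[y])=\inf_{k\in K_g}d_{S_g}(x,k\cdot y)
\]
is a metric on $S_g/K_g$ inducing the quotient topology, so $X$ is Hausdorff and locally metrizable. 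Properness of the action lets one promote regularity from local metric charts to $X$ globally, since a completely regular space has a completely regular quotient under a proper action.

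With $X$ regular, Hausdorff, and second countable, Urysohn's metrization theorem yields metrizability. The main obstacle is invoking Ebin's slice theorem, which is a substantial piece of global analysis on the space of Riemannian metrics; all remaining steps are routine point-set topology. An alternative route would attempt to define the quotient pseudometric $\inf_{\phi\in\Diff(M)}d_{C^\infty}(g_1,\phi^*g_2)$ directly, but verifying that it separates orbits and induces the quotient topology ultimately requires slice-type input of the same flavor as Ebin's theorem.
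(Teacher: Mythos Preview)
Your approach via Ebin's slice theorem and Urysohn's metrization theorem is workable but takes a heavier route than the paper. The paper proceeds exactly along what you call the ``alternative route'' and dismiss as requiring slice-type input: it quotes from Ebin's thesis that (i) the $C^\infty$ topology on $\mathcal R(M)$ is induced by a $\Diff(M)$-\emph{invariant} metric, and (ii) every $\Diff(M)$-orbit is closed; it then invokes Himmelberg's general theorem that for an isometric group action the infimum pseudometric on the orbit space induces the quotient topology. Closedness of orbits makes the quotient $T_1$, so the pseudometric is a genuine metric. No slice theorem, no Urysohn, no second-countability argument is needed.

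Your claim that the direct pseudometric route ``ultimately requires slice-type input of the same flavor as Ebin's theorem'' is therefore too pessimistic: closedness of orbits is proved in Ebin's thesis by a convergence/compactness argument (essentially Myers--Steenrod plus Arzel\`a--Ascoli), and Himmelberg's theorem is pure point-set topology. Conversely, your own argument leans on the assertion that a completely regular space has a completely regular quotient under a proper action; this is standard for locally compact $G$, but $\Diff(M)$ is not locally compact, so that step needs more justification than you give it (and filling it in carefully tends to lead back to the invariant-metric or slice arguments anyway). The paper's route sidesteps this issue entirely and is the more economical proof.
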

\begin{proof}
This is well-known, but
we cannot find a proof in the literature, and hence present it here for completeness. 
The smooth topology on the space of all Riemannian metrics
on $M$ is induced by a metric 
whose isometry group contains $\Diff(M)$~\cite[Proposition 148]{Ebin-thesis}, and 
every $\Diff(M)$-orbit is closed~\cite[Proposition 142]{Ebin-thesis}.
The corresponding pseudometric on the set of orbits
induces the quotient topology, and the pseudodistance is simply the infimum
of distances between the orbits~\cite[Theorem 4]{Hi68}. Since the orbits are closed,
the quotient space is $T_1$, so that the pseudometric is actually a metric.
\end{proof}

Also $X$ is locally path-connected (because this property is inherited by quotients, and
$X$ is the quotient of the space of metrics, which is an open subset in the Fr\'echet space
of $2$-tensors on $M$). 
In fact, every point of $X$ has a contractible neighborhood 
(as follows from the smooth version of Corollary 7.3 in~\cite{Ebin-symp} which can be deduced
from the discussion after the corollary) but we do not need it here. 

\begin{theorem} 
\label{thm: path scal >=0}
If $C$ is a connected subset of $X_{\mathrm{scal}\ge 0}$ that contains no Ricci-flat metrics,
then any two points $y, z\in C$ can be joined by a path in $\{y, z\}\cup X_{\mathrm{scal}>0}$. 
\end{theorem}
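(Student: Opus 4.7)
The plan is to use Ricci flow to instantaneously push any non-Ricci-flat metric in $X_{\mathrm{scal}\ge 0}$ into the open set $U:=X_{\mathrm{scal}>0}$, and to leverage this to attach to each $y\in C$ a path-component $[y]_U\in\pi_0(U)$ in a locally constant way. Connectedness of $C$ will then force $y\mapsto[y]_U$ to be constant, and concatenating two Ricci flow tails with a path in $U$ will produce the required path from $y$ to $z$ in $\{y,z\}\cup U$.

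First I would assemble three standard inputs from Ricci flow theory. Hamilton's short-time existence theorem together with DeTurck's trick gives smooth dependence of the flow on initial data in the $C^\infty$ topology; diffeomorphism-equivariance then lets the flow descend to a short-time continuous family of partial maps $R_t$ defined on open subsets of $X$; and the evolution equation $\partial_t\mathrm{scal}=\Delta\mathrm{scal}+2|\mathrm{Ric}|^2$ combined with the strong parabolic maximum principle yields the key instant-positivity statement: if $g\in X_{\mathrm{scal}\ge 0}$ is not Ricci-flat, then $R_t(g)\in U$ for every $t>0$ in the interval of existence of $g$. The only subtle subcase is $\mathrm{scal}(g)\equiv 0$, handled by observing that an interior zero of $\mathrm{scal}(R_t(g))$ at some later $t_0>0$ would, by the strong maximum principle, force $\mathrm{scal}\equiv 0$ on $[0,t_0]\times M$ and hence $\mathrm{Ric}(g)\equiv 0$ through the evolution equation, contradicting the hypothesis that $g$ is not Ricci-flat.

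With these tools in hand I would define $[y]_U\in\pi_0(U)$ for $y\in C$ to be the path-component containing $R_{t_0}(y)$ for any sufficiently small $t_0>0$; independence of $t_0$ holds because $t\mapsto R_t(y)$ on $(0,t_0]$ is a continuous path in $U$. The heart of the argument is showing that $y\mapsto[y]_U$ is locally constant on $C$. Given $y\in C$, I would fix $t_0>0$ and a path-connected open neighborhood $W$ of $R_{t_0}(y)$ in $U$, and then use continuity of $R_{t_0}$ on $X$ to find an open $N\ni y$ with $R_{t_0}(N)\subseteq W$. For any $z\in N\cap C$ the hypothesis that $C$ contains no Ricci-flat metric ensures $z$ is not Ricci-flat, so instant positivity applied to $z$ gives $R_t(z)\in U$ for all $t\in(0,t_0]$; in particular $R_{t_0}(z)\in W$, and hence $[z]_U=[y]_U$. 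Once local constancy is proved, the open fibers of $y\mapsto[y]_U$ partition the connected set $C$ into a single fiber, and concatenating the trajectories $y\to R_{t_0}(y)$ (in $\{y\}\cup U$) and $R_{t_0}(z)\to z$ (in $\{z\}\cup U$, read in reverse time) with a path in $W\subseteq U$ between $R_{t_0}(y)$ and $R_{t_0}(z)$ yields the desired path from $y$ to $z$ in $\{y,z\}\cup U$.

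The main obstacle will be assembling the Ricci flow inputs in exactly the form required: verifying that continuity and diffeomorphism-equivariance of Hamilton's flow combine to give a continuous partial flow on the moduli space $X$ (rather than on the space of metrics), and handling cleanly the borderline case $\mathrm{scal}\equiv 0$, $\mathrm{Ric}\not\equiv 0$ of the strong maximum principle, which is precisely what the no-Ricci-flat hypothesis is designed to accommodate. Neither piece is deep, but both are largely ``folklore'' in the Ricci flow literature, and a careful citation trail will be needed.
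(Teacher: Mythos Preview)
Your proposal is correct and follows the same core strategy as the paper: run Ricci flow for a short time to push non-Ricci-flat points of $C$ into $U=X_{\mathrm{scal}>0}$, use continuous dependence of the flow on initial data and local path-connectedness of $X$ to conclude that all of $C$ lands in a single path-component of $U$, and then concatenate the two Ricci-flow tails with a path in $U$.

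The organizational difference is worth noting. The paper globalizes first: using that $X$ is metrizable (hence paracompact) it produces a continuous time function $\tau\colon X\to(0,\infty)$ dominated on each set of a locally finite refinement by a uniform existence time, and then defines a single continuous map $T\colon X\to X$ sending $x$ to its Ricci flow at time $\tau(x)$. Then $T(C)$ is a connected subset of $U$, and one uses that open subsets of a locally path-connected space have path-connected components. Your version sidesteps the paracompactness step entirely by working locally: you define $y\mapsto [y]_U\in\pi_0(U)$ and check local constancy directly from continuity of $R_{t_0}$ near each $y$. This is slightly more elementary (no shrinking lemma or global $\tau$), at the cost of not producing an actual continuous map $C\to U$. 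Either way the substantive analytic inputs---short-time existence and continuous dependence, diffeomorphism-equivariance so the flow descends to $X$, and the strong maximum principle giving instant positivity for non-Ricci-flat initial data---are identical, and the paper supplies the citations (\cite{BGI}, \cite{Br10}) you anticipated needing.
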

\begin{proof}
By continuous dependence of Ricci flow on initial metric, see e.g.~\cite[Theorem A]{BGI}, for 
every point $x\in X$ there is a neighborhood $U_x$ and a positive constant $\tau_x$
such that the Ricci flow that starts at any point of $U_x$
exists in $[0,\tau_x]$. 

Being a metrizable space $X$ is paracompact Hausdorff, and hence 
has a locally finite open cover $\{R_{x_i}\}_{i\in I}$
such that $R_{x_i}\subset U_{x_i}$ for all $i$, 
and there is a continuous function $\tau\co X\to (0,\infty)$ with $\tau(x)\le \tau_{x_i}$ 
for all $x\in R_{x_i}$, see~\cite[Theorem 41.8]{Mun-book}. 
Since $C$ contains no Ricci-flat metrics, for every $x\in C$ the Ricci flow of $x$ 
has $\mathrm{scal}>0$ for all times in $(0,\tau(x)]$, see~\cite[Proposition 2.18]{Br10}.
By continuous dependence of the Ricci flow on initial metric
the map $T\co X\to X$ that sends $x$ to the Ricci flow of $x$ at time $\tau (x)$
is continuous.
Hence, if $C$ is a connected subset of $X_{\mathrm{scal}\ge 0}$
that contains $y, z$, then $T(C)$ is a connected subset 
of $X_{\mathrm{scal}>0}$.

Since $X_{\mathrm{scal}>0}$ is an open subset in the locally path-connected space $X$,
every connected component of $X_{\mathrm{scal}>0}$ is path-connected. Hence the connected
component of $X_{\mathrm{scal}>0}$ that contains $T(C)$ also contains
a path from $T(y)$ to $T(z)$. Concatenating the path with Ricci flows from
$y, z$ to $T(y), T(z)$, respectively, we get a path from $y$ to $z$ with desired properties. 
\end{proof}

\begin{proof}[Proof of Theorem~\ref{thm: path scal>0}]
No flat manifold admits a metric of $\mathrm{scal}>0$~\cite[Corollary A]{GL83}.
Hence $M$ admits no flat metric. 
Since Ricci-flat metrics of $K\ge 0$ are flat,
$\mathcal M_{K\ge 0}(M)$ contains no Ricci-flat metrics.
Applying Theorem~\ref{thm: path scal >=0} to the 
connected component of $\mathcal M_{K\ge 0}(M)$ that contains $y, z$
finishes the proof. 
\end{proof}

\section{Codimension two}
\label{sec: codim2}

\begin{proof}[Proof of Theorem~\ref{thm: witten}]
If  $\mathbb{S}^{2t+1}\to \mathbb{C}P^t$ is the circle bundle
obtained by restricting the diagonal circle action on $\CC^{t+1}$, where $t$ is a positive integer, then
its Euler class generates $H^2(\mathbb{C}P^t)$ as follows from the Gysin sequence and
$2$-connectedness of $\mathbb{S}^{2t+1}$. 
Consider the product of two such circle bundles with $t=1$ and $t=2n$.
Then the argument~\cite[p.227]{WZ90} 
implies that any $M$ as in (a) is the quotient of the Riemannian product of two unit spheres 
$\sph^3\times \sph^{4n+1}$ by the free isometric circle action
$e^{i\phi}(x,y)=(e^{il\phi} x,e^{-ik\phi}y)$ for some coprime integers $k$, $l$. 
This gives a Riemannian submersion metric on $M$ with $K\ge 0$ and $\mathrm{Ric}>0$. 

Sometimes it happens that the quotients corresponding to different pairs $(k, l)$ are diffeomorphic.
In fact, $H^4(M)$ is a cyclic group of order $l^2$, so up to sign $l$ is determined by the homotopy type of $M$, but
for a given $l$ the quotients fall into finitely many diffeomorphism types~\cite[Proposition 2.2]{DKT18}.
Their diffeomorphism classification was studied in~\cite{WZ90, KS93} and finally in~\cite{DKT18} where it was shown
that for each $n$ there are infinitely many homotopy types 
that contain $M$ as in (a) and such that the Riemannian submersion metrics as above
represent infinitely many connected components of $\mathcal{M}_{K\ge 0}(M)$.

Similarly, since $\sph^3\times \sph^{4n+1}$ is $2$-connected, 
any complex line bundle over $M$ is the quotient of $\sph^3\times \sph^{4n+1}\times\CC$
by the circle action $e^{i\phi}(x,y, z)=(e^{il\phi} x,e^{-ik\phi}y, e^{im}z)$, cf.~\cite[Lemma 12.3]{BKS15}.
In particular, $V$ carries a complete Riemannian submersion metric of $K\ge 0$ with soul 
equal to the zero section, which is the quotient of $\sph^3\times \sph^{4n+1}\times\{0\}$ by the above circle action, 
and hence is diffeomorphic to $M$.
   
If we fix $l$ and the Euler class of the line bundle in $H^2(M)\cong\ZZ$, 
there are only finitely many possibilities for the diffeomorphism type of the pair $(V, \text{\,soul})$
for the above metrics.
By varying $k$ appropriately, then we get a sequence of complete metrics of $K\ge 0$ on each $V$ as above
such that the metrics on the soul represent infinitely many connected components of $\mathcal{M}_{K\ge 0}(M)$.

If the line bundle is non-trivial, then $V$ is indecomposable, and the map {\bf soul}
is continuous by Theorem~\ref{thm: BFK cont}. 
Thus $\mathcal{M}_{K\ge 0}(V)$ has infinitely many connected components.
\end{proof}

\section{Equivariant Cheeger deformation}
\label{sec: cheeger}

The purpose of this section is to review the Cheeger deformation, and note that it passes to quotients
by free isometric actions.

Let $G$ be a compact Lie group with a bi-invariant metric $Q$ that acts isometrically on 
a Riemannian manifold $(M,q_0)$. Consider the diagonal $G$-action on $M\times G$
given by $a\cdot (p,g)=(ap, ag)$, $p\in M$, $a, g\in G$.
Its orbit space is commonly denoted by $M\times_G G$.
The map $\pi\co M\times G\to M$ given by $\pi(p,g)=g^{-1}p$ descends 
to a diffeomorphism $\phi\co M\times_G G\to M$.

For any positive scalar $t$
the $G$-action is isometric in the product metric $q_0+\frac{Q}{t}$, which 
induces a metric $q_t$ on $M$ that makes $\pi$ into a Riemannian submersion. 
Similarly, $\phi$ becomes an isometry between $q_t$, $t>0$, and the Riemannian submersion 
metric on $M\times_G G$ induced by $q_0+\frac{Q}{t}$.

The map $t\to g_t$ is continuous for $t\ge 0$; this is  
the {\em Cheeger deformation} of $q_0$, see e.g.~\cite[p.140]{AB15} or~\cite{Zi09}. 
The key property is that if $q_0$ has $K\ge 0$, then so does $q_t$ for all $t$.

Fix a closed subgroup $H$ of $G$ such that the $H$-action on $M$ is free.
For $t\ge 0$ let $\bm{q_t}$ be the metric
on $M/H$ that makes the $H$-orbit map into a Riemannian submersion
$\chi\co(M, q_t)\to (M/H, \bm{q_t})$.
The map $t\to \bm{q_t}$ is continuous for $t\ge 0$.

The $H$-action on $M\times G$ given by $h\cdot (p,g)=(p, gh^{-1})$
commutes with the diagonal $G$-action, 
and hence descends to a free $H$-action on $M\times_G\hspace{1pt} G$. 
For this action the maps $\pi$ and $\phi$ are $H$-equivariant, and
descend to a Riemannian submersion 
$M\times (H\backslash G)\to M/H$ and an isometry $M\times_G (H\backslash G)\to M/H$, 
respectively, where $t>0$ and 
$H\backslash G$ is given the Riemannian submersion metric induced by $\frac{Q}{t}$. 

Thus
in the following diagram all maps are Riemannian submersions for $t>0$
\begin{equation*}
\xymatrix{
& M\times G\ar[dl]\ar[r]\ar[d]^\pi    & M\times H\backslash G\ar[d]\ar[rd] &\\
M\times_G G\ar[r]^{\quad\phi} & M \ar[r]^{\chi} & M/H  & M\times_G H\backslash G \ar[l]
}
\end{equation*}
and $\chi$ is also a Riemannian submersion for $t=0$. In this diagram $M$ and $M/H$
are the only spaces where the metric corresponding to $t=0$ is defined.

\section{Some algebra and geometry of the $3$-sphere}
\label{sec: 3-sphere}

In this section we specialize the discussion of Section~\ref{sec: cheeger} to
the case when $G=\sph^3\times\sph^3$, where $\sph^3$ is thought of as unit quaternions, 
and $H$ is the diagonal subgroup of $G$, i.e., $H=\{(g,g)\,:\, g\in G\}$.

Consider the diffeomorphism $\psi\co\sph^3\to H\backslash G$ given by $\psi(c)=(c,1)H$;
thus $\psi^{-1}$ sends the coset $(a,b)H$ to $ab^{-1}$.
With this identification the (left) $G$-action on $H\backslash G$
becomes $(a,b)\cdot c=acb^{-1}$, where $a, b, c\in\sph^3$; indeed
\[(a,b)(c,1)H=(ac,b)H=(acb^{-1},1)H.\]
Since $(-1,-1)$ acts trivially, the $G$-action on $H\backslash G$ descends to an $SO(4)$ action
with isotropy subgroups isomorphic to $SO(3)$. 

It follows that any $G$-invariant Riemannian
metric on $H\backslash G$ is isometric to a round $3$-sphere (i.e., a metric sphere in $\RR^4$). 
Indeed, $SO(3)$ acts transitively
on every tangent $2$-sphere, so $G$ acts transitively on the unit tangent bundle,
and hence the metric has constant Ricci curvature, which on the $3$-sphere
makes the metric round.

The discussion in Section~\ref{sec: cheeger} immediately gives the following.

\begin{prop}
\label{prop: cheeger}
Let $H$ be the diagonal subgroup of $G=\sph^3\times\sph^3$.
Given an isometric $G$-action on a Riemannian manifold $(M, q_0)$ of $K\ge 0$
that restricts to a free $H$-action 
there is path of Riemannian metrics $(M, q_t)$ of $K\ge 0$, defined for $t\ge 0$, such that
\vspace{3pt}\newline
$\bullet$
for every $t\ge 0$ the $G$-action is $q_t$-isometric, and the Riemannian
submersion metric $(M/H, \bm{q_t})$ induced by $q_t$ has $K\ge 0$,
and $t\to \bm{q_t}$ is a continuous path of metrics on $M/H$,
\vspace{3pt}\newline
$\bullet$
if $t>0$ and $H\backslash G$ is given the Riemannian
submersion metric induced by a bi-invariant metric on $G$, 
then $H\backslash G$ is isometric to a round sphere, and 
$(M/H,\bm{q_t})$ is isometric to the Riemannian
submersion metric on $(M, q_t)\times_G H\backslash G$.
\end{prop}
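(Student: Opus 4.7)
The plan is to directly assemble the proposition from the general Cheeger deformation machinery set up in Section~\ref{sec: cheeger} and the identification of $H\backslash G$ as a round sphere observed in Section~\ref{sec: 3-sphere}. The action of $G$ on $M$ and the subaction of $H$ are already present by hypothesis, so everything reduces to running the apparatus with $G=\sph^3\times\sph^3$ and reading off the conclusion.

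Fix a bi-invariant metric $Q$ on $G$ and let $q_t$ be the Cheeger deformation of $q_0$ with respect to $Q$, as described in Section~\ref{sec: cheeger}. The standard properties recalled there are that $t\mapsto q_t$ is continuous on $[0,\infty)$, each $q_t$ is $G$-invariant, and the condition $K\ge 0$ is preserved throughout the deformation. Since $H\subset G$ acts freely and $q_t$-isometrically, the orbit map $\chi\co (M,q_t)\to (M/H,\bm{q_t})$ is a Riemannian submersion for every $t\ge 0$, so $\bm{q_t}$ has $K\ge 0$ by O'Neill's formula. Continuity of $t\mapsto\bm{q_t}$ follows from continuity of $t\mapsto q_t$ on $M$ together with the fact that the map $\chi$ is independent of $t$. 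This yields the first bullet.

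For the second bullet, the commutative diagram at the end of Section~\ref{sec: cheeger} supplies, for each $t>0$, an isometry
\[
(M,q_t)\times_G (H\backslash G)\ \longrightarrow\ (M/H,\bm{q_t})
\]
obtained by descending $\phi$, provided $H\backslash G$ is equipped with the submersion metric induced by $Q/t$. The scaled metric $Q/t$ is still bi-invariant, so the induced metric on $H\backslash G$ is $G$-invariant; by the discussion in Section~\ref{sec: 3-sphere} any $G$-invariant metric on $H\backslash G$ is isometric to a round $\sph^3$. Chaining these two identifications gives the required isometric description.

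No step presents a real obstacle, since the proposition essentially rephrases material already collected in Sections~\ref{sec: cheeger} and~\ref{sec: 3-sphere}. The one delicate bookkeeping point, which I would flag carefully in the write-up, is the behaviour at $t=0$: the product metric $q_0+Q/t$ on $M\times G$ is undefined there, so the isometric description in the second bullet must be restricted to $t>0$, while the first bullet remains valid at $t=0$ because $\chi\co (M,q_0)\to (M/H,\bm{q_0})$ is already a Riemannian submersion for the original metric.
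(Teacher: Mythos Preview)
Your proposal is correct and matches the paper's approach exactly: the paper does not give a separate proof but simply says ``The discussion in Section~\ref{sec: cheeger} immediately gives the following,'' and you have faithfully unpacked that discussion, invoking the Cheeger deformation properties for the first bullet and the descended isometry $M\times_G(H\backslash G)\to M/H$ from the commutative diagram together with the roundness argument of Section~\ref{sec: 3-sphere} for the second. Your flag about the $t=0$ endpoint is also precisely the caveat the paper records in the sentence following the diagram.
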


\section{Bundle theoretic facts}
\label{sec: bundles}

This section reviews several well-known bundle theoretic facts.

\begin{lemma} 
\label{lem: class spaces}
Let $C\le G$ be an order two normal subgroup of a topological group $G$. 
If $P\to X$ is a non-trivial principal $G$-bundle over a finite cell complex  with $H^1(X;\ZZ_2)=0$, 
then the associated principal $G/C$-bundle $P/C\to X$ is non-trivial.
\end{lemma}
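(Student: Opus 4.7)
The plan is to prove the contrapositive via classifying space theory: if the associated $G/C$-bundle $P/C$ is trivial, then $P$ itself must be trivial. The input is the short exact sequence $1 \to C \to G \to G/C \to 1$ of topological groups, which yields a fibration sequence of classifying spaces
\[
BC \to BG \to B(G/C).
\]
Under this, the natural map $BG \to B(G/C)$ sends the classifying map of a principal $G$-bundle to the classifying map of the associated $G/C$-bundle obtained by dividing out $C$.

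So, let $f\co X \to BG$ classify $P$, and assume for contradiction that $P/C$ is trivial. Then the composition $X \xrightarrow{f} BG \to B(G/C)$ is null-homotopic. Choosing such a null-homotopy and lifting it via the homotopy lifting property of the fibration $BG \to B(G/C)$, I would deduce that $f$ is homotopic to a map $g\co X \to BC$ which factors through the fiber $BC$ over the chosen basepoint.

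Since $C$ has order two, $BC$ is a $K(\ZZ_2, 1)$, and hence the set of homotopy classes $[X, BC]$ is naturally in bijection with $H^1(X; \ZZ_2)$. This group vanishes by hypothesis, so $g$ is null-homotopic, and therefore so is $f$. But then $P$ is trivial, contradicting our assumption; this completes the proof.

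The one step that deserves a careful word or citation, rather than being genuinely hard, is the existence of the fibration sequence $BC \to BG \to B(G/C)$ for a topological group $G$ with closed normal subgroup $C$, together with the identification of $BG \to B(G/C)$ with the classifying map of the $P \mapsto P/C$ construction. Both facts are standard and can be obtained from Milnor's bar construction of classifying spaces, but I would either cite this explicitly or verify it directly by noting that $EG/C$ is a model for $BC$ and $EG/G = BG$, while $EG/G$ also maps to $BG/C$ for any model. The finite cell complex hypothesis on $X$ is used only to ensure $[X, K(\ZZ_2,1)] \cong H^1(X;\ZZ_2)$, which is automatic in this setting.
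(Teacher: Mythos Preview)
Your proof is correct and follows essentially the same route as the paper's. The paper packages your homotopy lifting step as the exact sequence of pointed sets
\[
[X, BC]\to [X,BG]\to [X,BH]
\]
(citing \cite{MT68}), and then notes that $[X,BC]=H^1(X;\ZZ_2)=0$ forces injectivity of the second arrow; you simply carry out the underlying lifting argument by hand instead of invoking that reference.
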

\begin{proof} 
The surjection $G\to G/C=H$ induces a fibration of classifying spaces $BC\to BG\to BH$ 
where $BC$ is a homotopy fiber of $BG\to BH$, see~\cite{MO-fibration-classifying-spaces}. 
As explained in~\cite[p.139]{MT68}, for any finite complex $X$ we get an exact sequence of pointed sets 
\[
[X, BC]\to [X,BG]\to [X,BH]
\]

with constant maps as basepoints. Since $[X, BC]=H^1(X;\ZZ_2)=0$,
the rightmost arrow is injective.
\end{proof}

A {\em $k$-plane bundle\,} is a vector bundle with fiber $\RR^k$.

\begin{lemma}
\label{lem: 3-plane bundles}
Let $X$ be a paracompact space with $H^1(X;\ZZ_2)=0=H^2(X)$. If a 
$3$-plane bundle over $X$ has a nowhere zero section, then it is trivial. 
\end{lemma}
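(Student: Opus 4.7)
The plan is to reduce the problem to classifying oriented $2$-plane bundles by peeling off a trivial line summand and then invoking the fact that $BSO(2)$ is an Eilenberg--Mac Lane space.

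First I would use the nowhere zero section $s$ of the given $3$-plane bundle $\xi$ to produce an orthogonal splitting $\xi \cong \epsilon^1 \oplus \eta$, where $\epsilon^1$ is the trivial line sub-bundle spanned by $s$ (after choosing a Euclidean metric on $\xi$, which exists since $X$ is paracompact), and $\eta$ is its orthogonal complement, a $2$-plane bundle over $X$. It then suffices to show that $\eta$ is trivial.

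Next I would check orientability of $\eta$. Since $w_1(\eta) \in H^1(X;\ZZ_2) = 0$, the bundle $\eta$ is orientable, and after choosing an orientation it is classified by a map $X \to BSO(2)$. The point is that $BSO(2)$ is homotopy equivalent to $\CC P^\infty = K(\ZZ, 2)$, so that
\[
[X, BSO(2)] \;\cong\; H^2(X;\ZZ),
\]
and the bijection sends an oriented $2$-plane bundle to its Euler class. Since $H^2(X) = 0$, the Euler class of $\eta$ vanishes, so $\eta$ is trivial as an oriented bundle, and therefore trivial as a bundle. Combining with the splitting gives $\xi \cong \epsilon^1 \oplus \epsilon^2 = \epsilon^3$, as desired.

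There is no real obstacle here; the only point that deserves a brief comment is that paracompactness of $X$ is needed in order to equip $\xi$ with a Euclidean metric (so that the orthogonal complement of $s$ makes sense as a sub-bundle) and in order to apply the classifying-space theorem relating isomorphism classes of bundles over $X$ to homotopy classes of maps into $BSO(2)$.
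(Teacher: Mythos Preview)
Your argument is correct and follows essentially the same route as the paper's proof: split off the trivial line sub-bundle generated by the section, observe that the complementary $2$-plane bundle is orientable because $H^1(X;\ZZ_2)=0$, and then trivialize it via its Euler class in $H^2(X)=0$. The only cosmetic difference is that the paper phrases the triviality of the line summand in terms of $w_1$ rather than noting (as you do) that a line bundle with a nowhere zero section is already trivial.
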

\begin{proof} A nowhere zero section gives rise to a splitting of the bundle into the Whitney sum
of a line and a $2$-plane subbundles, which are orientable since $H^1(X;\ZZ_2)=0$, and in fact, trivial
because a line bundle is determined by its first Stiefel-Whitney class in $H^1(X;\ZZ_2)$, and
an orientable $2$-plane bundle is determined by its Euler class in $H^2(X)$.   
\end{proof}

\begin{lemma}
\label{lem: euler pontr}
If $X$ is a finite cell complex with $H^1(X;\ZZ_2)=0=H^4(X;\QQ)$, then 
the number of isomorphism classes of $3$-plane bundles over $X$ is finite.
\end{lemma}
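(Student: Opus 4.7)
The plan is to reduce the count to homotopy classes of maps into $BSO(3)$ and to bound this set via the Postnikov tower of $BSO(3)$, using the cohomological hypotheses on $X$.

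Since $H^1(X;\ZZ_2)=0$, the first Stiefel-Whitney class of any $3$-plane bundle over $X$ vanishes, so every such bundle is orientable, and isomorphism classes of $3$-plane bundles over $X$ correspond bijectively to elements of $[X,BSO(3)]$. Because $SO(3)$ is connected with universal cover $SU(2)\cong\sph^3$, we have $\pi_n(BSO(3))=\pi_{n-1}(SO(3))$; this group is $0$ for $n\in\{1,3\}$, is $\ZZ_2$ for $n=2$, is $\ZZ$ for $n=4$, and is finite for every $n\geq 5$ (as $\pi_i(\sph^3)$ is finite for $i\geq 4$). In particular, $BSO(3)$ is simply connected.

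Let $N=\dim X$. Since $X$ is a finite CW complex, $[X,BSO(3)]$ is in bijection with $[X,BSO(3)_N]$, where $BSO(3)_n$ denotes the $n$-th Postnikov stage. Climbing the tower inductively via the principal fibrations $K(\pi_n(BSO(3)),n)\to BSO(3)_n\to BSO(3)_{n-1}$, each nonempty fiber of the induced map $[X,BSO(3)_n]\to [X,BSO(3)_{n-1}]$ has cardinality at most $|H^n(X;\pi_n(BSO(3)))|$. Thus it suffices to show that $H^n(X;\pi_n(BSO(3)))$ is finite for $2\leq n\leq N$.

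This is a short check under the hypotheses. For $n=2$ and for $n\geq 5$ the coefficient group is finite and $X$ has finitely many cells, so $H^n(X;\pi_n(BSO(3)))$ is finite. For $n=4$ the group $H^4(X;\ZZ)$ is finitely generated with $H^4(X;\ZZ)\otimes\QQ=H^4(X;\QQ)=0$, hence is a finite torsion group. The main technical ingredient is the Postnikov obstruction-theoretic argument, but this is standard for simply connected targets, and in our situation all the relevant obstruction groups are finite, so the induction closes.
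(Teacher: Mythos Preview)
Your proof is correct and self-contained, but it takes a different route from the paper's. The paper simply invokes an external result \cite[Theorem~A.0.1]{Be01}: over a finite complex there are only finitely many isomorphism classes of orientable $3$-plane bundles with a given rational first Pontryagin class; since here $p_1\in H^4(X;\QQ)=0$, the conclusion follows in one line. You instead unwind this via the Postnikov tower of $BSO(3)$, using Serre's finiteness of $\pi_i(\sph^3)$ for $i\ge 4$ to see that the only infinite obstruction group arises at level $4$, where the hypothesis $H^4(X;\QQ)=0$ forces $H^4(X;\ZZ)$ to be finite. Your argument is essentially what lies behind the cited theorem (specialized to the case $p_1=0$ rationally), so it is more elementary in the sense of avoiding a black-box citation, at the cost of being longer. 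One minor imprecision: the bijection you claim is between \emph{oriented} isomorphism classes and $[X,BSO(3)]$; unoriented isomorphism classes are the image of this set in $[X,BO(3)]$, hence a quotient, so finiteness is unaffected.
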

\begin{proof}
Since $H^1(X;\ZZ_2)=0$, any vector bundle over $X$ is orientable.
There are only finitely many isomorphism classes of orientable $3$-plane bundles 
with a given first rational Pontryagin class~\cite[Theorem A.0.1]{Be01}, which lies in
$H^4(X;\QQ)=0$.  
\end{proof}

\section{Codimension three}
\label{sec: codim 3}

This section ends with a proof of Theorem~\ref{thm: cd 3}. First, we recall some results and notations
from~\cite{GZ00}. 

Following~\cite[p.349]{GZ00} let $P_{k,l}$ denote the principal $\sph^3\times \sph^3$-bundle over $\sph^4$
classified by the map $q\to (q^k, q^{-l})$ in $\pi_3(\sph^3\times \sph^3)\cong\ZZ\times\ZZ$, where $q\in \sph^3$.

Let $M_{k,l}$ be the the associated bundle
$P_{k,l}\times_{\sph^3\times \sph^3} \sph^3$ where the action on $\sph^3$ is 
as in Section~\ref{sec: 3-sphere}, see~\cite[p.352]{GZ00}.
Equivalently~\cite[Proposition 8.27]{Po95}, the action is given by the universal covering
$\sph^3\times \sph^3\to SO(4)$ where the $SO(4)$-action on $\sph^3$ is standard,
Hence, $M_{k,l}$ is a linear $\sph^3$-bundle over $\sph^4$.

The Euler number and the Pontryagin number of the $\sph^3$-bundle $M_{k,l}\to \sph^4$ are
$\pm(k+l)$ and $\pm 2(k-l)$, see~\cite[p.159, 169]{Kr10}. 
The Gysin sequence shows that $H^4(M_{k,l})\cong\ZZ_{k+l}$ if $k+l\neq 0$, and 
then $H^4(M_{k,l})\cong\ZZ$ if $k+l=0$.


\begin{remark}
\rm Somewhat confusingly, the notation $M_{m,n}$ is also used in the literature to denote the total space of 
another $\sph^3$-bundle over $\sph^4$ based on a different choice of generators
in $\pi_3(\sph^3\times \sph^3)$. This usage goes back to James and Whitehead, and
more to the point, appears in works quoted below. Thus 
$M_{k,l}$ of~\cite{GZ00} equals $M_{m,n}$ of~\cite{CE03, Go20a} when $m=-l$, $n=k+l$. In what follows all results
are rephrased in notations of~\cite{GZ00}.  
\end{remark}
 
According to Section~\ref{sec: cheeger} $M_{k,l}$ can be described as $P_{k,l}/H$ 
where $H$ is the diagonal subgroup in $\sph^3\times \sph^3$, cf. also Key Observation in~\cite{GKS20}.
Thus $M_{k,l}$ is the base of a principal $\sph^3$-bundle with total space $P_{k,l}$.
Our strategy hinges on the following:

\begin{problem} Find all $k, l$ such that
the principal $H$-bundle $P_{k,l}\to P_{k,l}/H=M_{k,l}$ is non-trivial.
\end{problem}

Some partial solutions are presented below. An especially interesting case (which we could not resolve in this paper)
is when $|k+l|=1$, or equivalently, $M_{k,l}$ is a homotopy sphere. 

\begin{lemma}
If $kl=0$, the principal $H$-bundle $P_{k,l}\to M_{k,l}$ is trivial.
\end{lemma}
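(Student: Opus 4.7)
Proof proposal: The plan is to exploit the fact that when $k=0$ (the case $l=0$ is symmetric) the clutching function $q\mapsto(q^k,q^{-l})=(1,q^{-l})$ of the principal $G=\sph^3\times\sph^3$-bundle $P_{0,l}\to\sph^4$ has trivial first coordinate. Consequently $P_{0,l}$ reduces along the second factor inclusion $\{1\}\times\sph^3\hookrightarrow G$ to the principal $\sph^3$-bundle $P'\to\sph^4$ classified by $q\mapsto q^{-l}$, and the total space splits diffeomorphically as $P_{0,l}\cong\sph^3\times P'$ as a bundle over $\sph^4$. Under this identification the right $G$-action is $(x,p)\cdot(g_1,g_2)=(xg_1,\,p\cdot g_2)$, where $p\mapsto p\cdot g$ denotes the principal $\sph^3$-action on $P'$.

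Once this splitting is in place, the diagonal subgroup $H=\{(g,g):g\in\sph^3\}$ acts on $\sph^3\times P'$ by $g\cdot(x,p)=(xg,\,p\cdot g)$. To exhibit a trivialization of the principal $H$-bundle $P_{0,l}\to M_{0,l}=P_{0,l}/H$, I would define
\[
\Phi\co\sph^3\times P'\longrightarrow P'\times H,\qquad (x,p)\longmapsto(p\cdot x^{-1},\,x),
\]
where we use the standard identification $H\cong\sph^3$. A direct check (with inverse $(q,x)\mapsto(x,\,q\cdot x)$) shows $\Phi$ is a diffeomorphism, and it intertwines the above $H$-action with the $H$-action on $P'\times H$ by right multiplication in the second factor: indeed $\Phi(xg,\,p\cdot g)=((p\cdot g)\cdot(xg)^{-1},\,xg)=(p\cdot x^{-1},\,xg)$. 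This identifies the orbit map with the projection $P'\times H\to P'$, so the bundle is trivial and at the same time $M_{0,l}\cong P'$.

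The argument for $l=0$ is entirely analogous, with the roles of the two $\sph^3$-factors of $G$ interchanged: the clutching $(q^k,1)$ reduces $P_{k,0}$ to $P''\times\sph^3$ where $P''\to\sph^4$ is classified by $q\mapsto q^k$, and the map $(p,x)\mapsto(p\cdot x^{-1},x)$ trivializes the resulting $H$-bundle.

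There is no real obstacle here; the only thing to watch carefully is the bookkeeping of which $\sph^3$-factor of $G$ carries the principal action on $P'$ (respectively $P''$) and which one acts on the auxiliary $\sph^3$ factor, so that the diagonal $H$-action is correctly described before invoking $\Phi$. Once that is set up, the trivialization is immediate from the standard cancellation trick $(x,p)\mapsto(p\cdot x^{-1},x)$ used to trivialize a free diagonal action.
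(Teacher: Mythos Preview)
Your argument is correct and follows essentially the same route as the paper: both proofs first observe that when $kl=0$ the principal $\sph^3\times\sph^3$-bundle $P_{k,l}$ splits as a product of a principal $\sph^3$-bundle over $\sph^4$ with a trivial $\sph^3$ factor, and then trivialize the diagonal $H$-action on that product. The only cosmetic difference is that the paper exhibits a section (the slice $P\times\{1\}\hookrightarrow P\times\sph^3$, shown to be transverse to the $H$-orbits and hence to descend to a diffeomorphism onto the quotient), whereas you write down the equivalent explicit $H$-equivariant diffeomorphism $\Phi$ to $P'\times H$; these are two ways of saying the same thing.
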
 
\begin{proof}
The principal $\sph^3\times\sph^3$-bundle
$P_{k,0}$ is isomorphic to $P\times \sph^3$ for some principal $\sph^3$-bundle 
$P$ over $\sph^4$. The inclusion $i\co P\to P\times \sph^3$ given by $i(p)=(p,1)$
is transverse to the $H$-orbits, hence it descends to an immersion 
$\bm{i}\co P\to (P\times \sph^3)/H$, which is a diffeomorphism 
because both domain and co-domain are closed manifolds of the same dimension. 
Then $i\circ\bm{i}^{-1}$ is a section of 
$P\times \sph^3\to (P\times \sph^3)/H$, and 
any principal bundle with a section is trivial.
\end{proof}

Lemma~\ref{lem:Milnor} below sheds some light on why the 
assumption ``$\frac{p_1(\xi)}{2e(\xi)}$ is not odd'' is relevant. Let us
first restate the assumption:

\begin{lemma}
\label{lem: odd}
$\frac{k-l}{k+l}$ is an odd integer if and only if $\frac{k}{k+l}\in\ZZ$.
\end{lemma}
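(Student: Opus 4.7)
The plan is to reduce both conditions in the biconditional to a single common integrality condition, namely $\frac{l}{k+l}\in\ZZ$. Note that $k+l\neq 0$ is implicit (otherwise the ratio $\frac{k-l}{k+l}$ is undefined), and this holds in the intended application because $e(\xi)=\pm(k+l)$ is assumed nonzero in Theorem~\ref{thm: cd 3}.

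First I would record the two elementary identities
\[
\frac{k-l}{k+l}=1-\frac{2l}{k+l},\qquad \frac{k}{k+l}=1-\frac{l}{k+l},
\]
which follow from the decompositions $k-l=(k+l)-2l$ and $k=(k+l)-l$. From the second identity it is immediate that $\frac{k}{k+l}\in\ZZ$ if and only if $\frac{l}{k+l}\in\ZZ$, since the difference of $1$ and an integer is again an integer.

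For the remaining half, I would use the characterization that an integer $n$ is odd precisely when $n=1-2m$ for some $m\in\ZZ$ (take $m=(1-n)/2$). Applying this to the first identity, $\frac{k-l}{k+l}$ is an odd integer if and only if $\frac{2l}{k+l}$ equals some even integer $2m$, which is equivalent to $\frac{l}{k+l}=m\in\ZZ$. Chaining the two equivalences through the common condition $\frac{l}{k+l}\in\ZZ$ yields the biconditional.

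The argument is purely arithmetic, and I do not anticipate any real obstacle. The only point that calls for mild care is distinguishing ``integer'' from ``odd integer'' at the right moment: the factor of $2$ in the numerator of $\frac{2l}{k+l}$ is exactly what converts the parity information on $\frac{k-l}{k+l}$ into plain integrality of $\frac{l}{k+l}$, which is why the two conditions line up.
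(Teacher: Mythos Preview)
Your argument is correct and essentially the same as the paper's: the paper adds $1$ to obtain $\frac{k-l}{k+l}+1=\frac{2k}{k+l}$ and observes this is even iff $\frac{k}{k+l}\in\ZZ$, whereas you subtract from $1$ and route through $\frac{l}{k+l}$ instead, which is a cosmetic mirror of the same one-line arithmetic.
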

\begin{proof}
$\frac{k-l}{k+l}$ is odd if and only if $\frac{k-l}{k+l}+1=\frac{2k}{k+l}$ is even
if and only if $\frac{k}{k+l}\in\ZZ$
\end{proof}

\begin{lemma}
\label{lem:Milnor}
If $kl\neq 0$ and the principal $H$-bundle $P_{k,l}\to P_{k,l}/H=M_{k,l}$ is trivial, then $k+l\neq 0$ 
and $\frac{k}{k+l}\in\ZZ$.
\end{lemma}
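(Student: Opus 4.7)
The approach is to decompose $P_{k,l}$ as the fibered product $P_k\times_{\sph^4} P_{-l}$, where $P_j\to \sph^4$ is the principal $\sph^3$-bundle classified by $q\mapsto q^j$ in $\pi_3(\sph^3)\cong \ZZ$ (using $B(\sph^3\times\sph^3)\cong B\sph^3\times B\sph^3$). The $\sph^3\times\sph^3$-action on $P_{k,l}$ is coordinatewise, so the diagonal $H$ acts on each factor as its own structure group. In particular, the first projection $\text{pr}_1\co P_{k,l}\to P_k$ is $H$-equivariant and fits into a commuting square with the bundle projections $\pi_k\co P_k\to \sph^4$, $P_{k,l}\to M_{k,l}$, and $\pi\co M_{k,l}\to \sph^4$.

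Suppose $P_{k,l}\to M_{k,l}$ admits a section $s\co M_{k,l}\to P_{k,l}$, and set $\bar f=\text{pr}_1\circ s$. From the commuting square, $\pi_k\circ\bar f=\pi$, so $\bar f$ is a section of the pullback principal $\sph^3$-bundle $\pi^*P_k\to M_{k,l}$. Hence $\pi^*P_k$ is trivial, and its Euler class $\pi^*(e(P_k))=k\pi^*(u)$ vanishes in $H^4(M_{k,l})$, where $u\in H^4(\sph^4)\cong \ZZ$ is a generator and $P_k$ has Euler number $k$ (up to sign, which is immaterial here).

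The Gysin sequence for the $\sph^3$-bundle $\pi$, whose Euler number is $\pm(k+l)$, reads
\[
H^0(\sph^4)\xrightarrow{\cup e} H^4(\sph^4)\xrightarrow{\pi^*} H^4(M_{k,l})\to H^1(\sph^4)=0,
\]
so $\pi^*(u)$ generates $H^4(M_{k,l})$ with order $|k+l|$ if $k+l\neq 0$, and with infinite order if $k+l=0$. If $k+l=0$, the vanishing $k\pi^*(u)=0$ forces $k=0$, contradicting $kl\neq 0$; hence $k+l\neq 0$, and $k\pi^*(u)=0$ becomes $(k+l)\mid k$, i.e.\ $\frac{k}{k+l}\in\ZZ$. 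The main content is the first step --- observing that a section of $P_{k,l}\to M_{k,l}$ automatically produces via $\text{pr}_1$ a section of $\pi^*P_k$; once this is in hand the Euler obstruction together with the Gysin computation finish things mechanically, modulo keeping track of sign conventions.
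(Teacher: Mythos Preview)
Your argument is correct. Both your proof and the paper's exploit the projection $P_{k,l}\to P_k$ (in the paper this appears as the quotient by $1\times\sph^3$), together with a Gysin sequence, but the two are organized differently. The paper uses the triviality hypothesis in the form $P_{k,l}\cong \sph^3\times M_{k,l}$, so that K\"unneth gives $H^4(P_{k,l})\cong H^4(M_{k,l})$; it then runs the Gysin sequence of the $\sph^3$-bundle $P_{k,l}\to P_k$ to produce a surjection $\ZZ_k\cong H^4(P_k)\twoheadrightarrow H^4(P_{k,l})$, forcing $(k+l)\mid k$ and ruling out $k+l=0$. You instead push the section forward along $\mathrm{pr}_1$ to trivialize $\pi^*P_k$ over $M_{k,l}$, and then read off the divisibility from the vanishing of the Euler class $\pi^*(e(P_k))=k\,\pi^*(u)$ inside $H^4(M_{k,l})$, whose structure you obtain from the Gysin sequence of $M_{k,l}\to\sph^4$. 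Your route is perhaps a touch more conceptual---it is a straight characteristic-class obstruction argument and never touches $H^*(P_{k,l})$---while the paper's route is a clean ``compute $H^4$ two ways'' comparison. One small remark: your $\mathrm{pr}_1$ is an $H$-equivariant bundle map covering $\pi$, so in fact $P_{k,l}\to M_{k,l}$ is canonically isomorphic to $\pi^*P_k\to M_{k,l}$ as a principal $\sph^3$-bundle; you do not actually need to pass through the section to see that $\pi^*P_k$ is trivial, though your argument via the section is of course fine.
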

\begin{proof}
If the bundle is trivial, 
$P_{k,l}$ is diffeomorphic to $\sph^3\times M_{k,l}$. By the K\"unneth formula 
$H^4(P_{k,l})\cong H^4(M_{k,l})$ which is $\ZZ_{k+l}$ if $k+l\neq 0$, and 
$\ZZ$ if $k+l=0$. 
As was mentioned on~\cite[p.349]{GZ00}, the quotient of the principal $\sph^3\times \sph^3$-bundle $P_{k,l}$
by the subgroup $1\times \sph^3$ can be identified with $P_k$, 
the principal $\sph^3$-bundle over $\sph^4$ with Euler number $k$. 
Since $k\neq 0$, we get
$H^4(P_k)\cong\ZZ_{k}$~\cite[p.346]{GZ00}.
The Gysin sequence for the $\sph^3$-bundle $P_{k,l}\to P_k$ reads
\begin{equation*}
\xymatrix{
\ZZ_k\cong H^4(P_k)\ar[r] & H^4(P_{k,l})\ar[r] & H^1(P_k)=0,
}
\end{equation*}
which shows that $k+l$ is a nonzero integer that divides $k$.
\end{proof}

\begin{remark}\rm
The asymmetry in the conclusion of Lemma~\ref{lem:Milnor} is an illusion:
$\frac{k}{k+l}\in\ZZ$ if and only if $\frac{l}{k+l}\in\ZZ$ because
$\frac{k}{k+l}+\frac{l}{k+l}=1$.
\end{remark}

\begin{proof}[Proof of Theorem~\ref{thm: cd 3}]
By Proposition 3.11 of~\cite{GZ00} each $P_{k,l}$ admits a cohomogeneity one action by 
$\mathbb{S}^3\times \mathbb{S}^3\times \mathbb{S}^3$ with codimension  two singular orbits, and such that
the action of the subgroup $G:=\mathbb{S}^3\times \mathbb{S}^3\times \{1\}$ coincides with the principal bundle action.
Hence by \cite[Theorem E]{GZ00} the space $P_{k,l}$ carries a $G$-invariant metric $\gamma_{k,l}$ of $K\ge 0$.

Let $\sph^3(r)$ be the round $3$-sphere of radius $r$ on which $\sph^3\times\sph^3$ acts  
as in Section~\ref{sec: 3-sphere}. Let $h_{k,l, r}$ be the Riemannian submersion metrics on
$M_{k,l}=P_{k,l}\times_{\sph^3\times\sph^3} \sph^3$ induced by the product of $\gamma_{k,l}$ and $\sph^3(r)$.
Then $h_{k,l, r}$ has $K\ge 0$ and $\mathrm{scal}>0$ by~\cite[Theorem 2.1]{Go20a}.

An essential point is that there are infinitely many ways to represent $M$ as $M_{k,l}$. 
Indeed, by assumption $M=M_{k,l}$ for some $k,l\in\ZZ$ with $k+l\neq 0$ and such that
$\frac{k-l}{k+l}$ is not an odd integer. The latter is equivalent to
$\frac{k}{k+l}\notin\ZZ$ by Lemma~\ref{lem: odd}. For $i\in\ZZ$ 
let \[
l_i=l-56(k+l)i\quad\text{and}\quad k_i=k+l-l_i=-l+(k+l)(56i+1).
\] 
Then $M_{k_i, l_i}$ are 
orientation-preserving diffeomorphic to $M$~\cite[Corollary 1.6]{CE03}. 
By~\cite[Section 3.1]{Go20a} there is $r$ and infinitely many values of $i$
for which the metrics $h_{k_i,l_i, r}$ lie in different connected components
of $\mathcal{M}_{K\ge 0}(M)$. 

Let $g_{k,l}$ be the Riemannian submersion metric of 
$K\ge 0$ induced on $M_{k,l}=H\backslash P_{k,l}$ by $\gamma_{k,l}$.
Proposition~\ref{prop: cheeger} implies that $g_{k,l}$ and $h_{k,l, r}$ lie
in the same path-component of $\mathcal{M}_{K\ge 0}(M_{k,l})$.
Thus for $k_i,l_i$ as in the previous paragraph $g_{k_i,l_i}$ 
lie in different connected components
of $\mathcal{M}_{K\ge 0}(M)$. 

Consider the associated vector bundle $P_{k,l}\times_{H}\RR^3$ over $M_{k,l}$
where $H=\sph^3$ acts on $\RR^3$ via the universal covering $\sph^3\to SO(3)$.
We give $P_{k,l}\times_{H}\RR^3$ the Riemannian submersion metric induced by
the product of $\gamma_{k,l}$ and the standard Euclidean metric.
This is a complete metric of $K\ge 0$ with soul $P_{k,l}\times_{H}\{0\}$ which
is isometric to $(M_{k,l}, g_{k,l})$.

Since $\frac{k_i}{k_i+l_i}\notin\ZZ$, the principal $\sph^3$-bundle $P_{k_i,l_i}\to M_{k_i,l_i}$ is non-trivial 
by Lemma~\ref{lem:Milnor}. 
Consider the associated $3$-plane bundle $P_{k_i,l_i}\times_{\sph^3}\RR^3$ over $M_{k_i,l_i}$
where $\sph^3$ acts on $\RR^3$ via the universal covering $\sph^3\to SO(3)$.
Any such vector bundle is non-trivial by Lemma~\ref{lem: class spaces}, and hence 
by Lemma~\ref{lem: 3-plane bundles}
its total space is indecomposable.
Pull back the vector bundles via diffeomorphisms $M\to M_{k_i,l_i}$.
The pullback bundles fall into finitely many isomorphism classes by Lemma~\ref{lem: euler pontr},
so after passing to a subsequence we can assume that the bundles are isomorphic,
and hence share the same ten-dimensional total space, which we denote $V$.
   
In summary, $V$ is an indecomposable manifold with infinitely many complete metrics of $K\ge 0$
whose souls are all equal to the zero section, and diffeomorphic to $M$, and such that
the induced metrics on the souls lie in different connected components of $\mathcal{M}_{K\ge 0}(M)$. 
Theorem~\ref{thm: BFK cont} finishes the proof.
\end{proof}

\small

\providecommand{\bysame}{\leavevmode\hbox to3em{\hrulefill}\thinspace}
\providecommand{\MR}{\relax\ifhmode\unskip\space\fi MR }
\providecommand{\MRhref}[2]{%
  \href{http://www.ams.org/mathscinet-getitem?mr=#1}{#2}
}
\providecommand{\href}[2]{#2}

\end{document}